\DeclareMathOperator{\codim}{codim}
\DeclareMathOperator{\Gal}{Gal}
\DeclareMathOperator{\gr}{gr}
\DeclareMathOperator{\Int}{Int}
\newcommand{\mG}{{\mathbb G}}
\newcommand{\mN}{{\mathbb N}}
\newcommand{\mP}{{\mathbb P}}
\newcommand{\mQ}{{\mathbb Q}}
\newcommand{\mZ}{{\mathbb Z}}
\newcommand{\F}{{\mathcal F}}
\renewcommand{\L}{{\mathcal L}}
\renewcommand{\max}{{\rm max}}
\newcommand{\N}{{\mathcal N}}
\newcommand{\Fb}{\boldsymbol{\mathcal F}}
\renewcommand{\to}{\longrightarrow}
\newcommand{\xto}{\xrightarrow}
\newcommand{\spa}{\ }
\theoremstyle{plain}
\newtheorem{thm}{Theorem}[section]
\newtheorem{prop}[thm]{Proposition}
\theoremstyle{definition}
\newtheorem{eg}[thm]{Example}
\numberwithin{equation}{section}
\title{The fundamental group of period domains over finite fields}
\author{Sascha Orlik}
\date{}
\address{ Universit\"at Leipzig\\
Fakult\"at f\"ur Mathematik und Informatik\\
Postfach 100920\\
 04009 Leipzig\\ Germany.}
\email{orlik@mathematik.uni-leipzig.de}
\begin{document}

\maketitle

\begin{abstract}
We determine the fundamental group of period domains over finite fields. This answers a question of M. Rapoport raised in \cite{R}. 
\end{abstract}

\section{Introduction}
Period domains over finite fields are open subvarieties of flag varieties  defined by a semi-stability condition. They were introduced and discussed by M. Rapoport in \cite{R}. 
In this paper we determine their fundamental groups  which answers a question  raised in loc.cit. 

Let $G$ be a reductive group over a finite field $k$. We fix an algebraic closure
$\overline{k}$ of $k$ and denote by $\Gamma = \Gamma_k$ the corresponding absolute
Galois group of $k$. Let $\N$ be a conjugacy class of $\mQ$-$1$-PS of $G_{\overline{k}}$.
We denote by $E=E(G,\N)$ the reflex field of the pair $(G,\N)$. This is a finite extension of $k$ which
is characterized by its Galois group $\Gamma_E = \{\sigma\in\Gamma\mid \nu\in\N\Longrightarrow\nu^\sigma\in\N\}$.
Every $\mQ$-$1$-PS $\nu$ induces via Tannaka formalism a $\mQ$-filtration $\F_\nu$ over $\bar{k}$ of the forgetful  fibre functor $\omega^G: {\rm Rep_k(G)} \rightarrow {\rm Vec}_k$ from the category of algebraic $G$-representations over $k$  into the category of $k$-vector spaces. 
Two $\mQ$-1-PS are called  par-equivalent if they define the same $\mQ$-filtration. There exists a smooth projective
variety $\boldsymbol{\F} (G, \N)$ over $E$ with
\begin{equation*}
\boldsymbol{\F} (G, \N) (\overline{k}) = \{\nu\in\N\spa\text{ {\rm modulo par-equivalence} }\}\spa .
\end{equation*}
The variety is  a generalized flag variety for
$G_E$. More precisely, by a lemma of Kottwitz \cite{K}, there is a
$\mQ$-$1$-PS $\nu\in\N$ which is defined over $E = E (G, \N)$.
Thus we my write $\boldsymbol{\F} (G, \N) =G_E/P,$ where $P=P(\nu)$ is the parabolic subgroup of $G_E$ attached to $\nu$.
Further, after fixing a maximal torus and a Borel subgroup in $G$, we may suppose  that $\nu$ is contained in the closure  $\bar{C}_\mQ$ of the corresponding rational Weyl chamber $C_\mQ$. 

A point $x\in \Fb(G,\N)(\bar{k})$ is called semi-stable if the induced 
filtration $\F_x({\rm Lie} (G)_{\bar{k}})$ on the adjoint representation ${\rm Lie} (G)_{\bar{k}}={\rm Lie} (G)\otimes_k {\bar{k}}$ of $G$ is semi-stable. The latter means that for all $k$-subspaces $U$ of ${\rm Lie} (G)$,
the following inequality is satisfied
$$\frac{1}{\dim U}\Big(\sum\nolimits_y y\cdot \dim \gr_{\F|U_{\bar{k}}}^y(U_{\bar{k}})\Big) \leq \frac{1}{\dim {\rm Lie}(G)}\Big(\sum\nolimits_y y\cdot \dim \gr_{\F}^y({\rm Lie}(G)_{\bar{k}})\Big). $$  
In \cite{DOR} it is shown that there is an open subvariety $\Fb(G,\N)^{ss}$ of $\Fb(G,\N)$ parame\-trizing all semi-stable points, i.e.
$\Fb(G,\N)(\bar{k})^{ss}=\Fb(G,\N)^{ss}(\bar{k})$.  This open subvariety $\Fb(G,\N)^{ss}$ is called the {\it period domain} to $(G,\N)$.

The most prominent example of a period domain is the Drinfeld upper half plane $\Omega_k^{(\ell+1)}=\mP_k^\ell\setminus \cup\,\mP(H)$ where $H$ runs through all $k$-rational hyperplanes of $k^{\ell+1}$. This space corresponds to the pair $(G,\N)$ where $G={\rm PGL}_{\ell+1,k}$ and
$\nu=(x_1,x_2,\ldots,x_2)\in \bar{C}_\mQ$ with $x_1>x_2$ and $x_1+\ell \cdot x_2=0.$ 
Here we identify $\bar{C}_\mQ$ as usual with $(\mQ^{\ell+1})^0_+=\{(x_1,\ldots ,x_{\ell+1})\in \mQ^{\ell+1} \mid \sum_i x_i=0,\, x_1\geq x_2 \geq \ldots \geq x_{\ell+1} \}$. 
The period domain $\Omega_k^{(\ell+1)}$ is isomorphic to
a Deligne-Lusztig variety and admits therefore interesting \'etale coverings, cf. \cite{DL}. In \cite{OR} it is shown that $\Omega_k^{(\ell+1)}$ is essentially the only period domain which is at the same time a Deligne-Lusztig variety.

Period domains only depend on their adjoint data, cf. \cite{OR}, \cite{DOR}. More precisely, let
$G_{\rm ad}$ be the adjoint group of $G$, and let $\N_{\rm ad}$ be the
induced conjugacy class of $\mQ$-$1$-PS of $G_{\rm ad}$. Then 
\begin{equation*}
\boldsymbol{\F} (G, \N) (\bar{k})^{ss}\xto{\sim}\boldsymbol{\F} (G_{\rm ad}, \N_{\rm ad}) (\bar{k})^{ss}\spa .
\end{equation*}
Also if $G$ splits into a product $G=\prod_i G$, the corresponding period domain splits into products, as well.
Thus for formulating our main result, we may assume that $G$ is $k$-simple adjoint. Hence there is an absolutely simple adjoint group $G'$ over a finite extension $k'$ of $k$ with $G={\rm Res}_{k'/k} G'.$ In this case $\N=(\N_1,\ldots,\N_t)$ is given by a tuple of conjugacy classes $\N_j$ of $\mQ$-1-PS of $G'_{\bar{k}},$ where $t=|k':k|.$  Thus $\nu$ is given by a tuple of $\mQ$-1-PS $\nu=(\nu_1,\ldots,\nu_t).$ 

Our main result is the following. Let $\ell$ be the (absolute) rank of $G'.$ We denote by $\pi_1$ the functor which associates to a variety its  geometric fundamental group.

\smallskip
\noindent {\bf Theorem 1.} {\it Let $G$ be absolutely simple adjoint over $k$. Then
 $\pi_1(\Fb(G,\N)^{ss})=\{1\}$ unless  $G={\rm PGL}_{\ell+1,k}$
and $\nu=(x_1\geq x_2 \geq \ldots \geq x_{\ell+1})\in (\mQ^{\ell+1})^0_+$ with $x_2<0$ or $x_\ell>0$.
In the latter case we have $\pi_1(\Fb(G,\N)^{ss})=\pi_1(\Omega_k^{(\ell+1)})$. 

More generally, let  $G={\rm Res}_{k'/k} G'$  be $k$-simple adjoint.
Then $\pi_1(\Fb(G,\N)^{ss})=\{1\}$ unless  $G'={\rm PGL}_{\ell+1,k'}$ and  there is a unique $1\leq j \leq t,$ 
such that the following two conditions are satisfied.  Let $\nu_j=(x^{[j]}_1\geq x^{[j]}_2 \geq \ldots \geq x^{[j]}_{\ell+1})\in (\mQ^{\ell+1})^0_+,\,j=1,\ldots,t.$ Then

\smallskip
(i)  $\nu_j$ is as in the absolutely simple case, i.e., with $x^{[j]}_2<0$ or $x^{[j]}_\ell>0.$ 

\smallskip
(ii)  $\sum_{i\neq j}x^{[i]}_{1} < - x_2^{[j]}$ if $x^{[j]}_2<0$ resp. $\sum_{i\neq j}x^{[i]}_{\ell+1}> - x_\ell^{[j]}$ if $x^{[j]}_\ell> 0.$  

\smallskip
\noindent In the latter case we have $\pi_1(\Fb(G,\N)^{ss})=\pi_1(\Omega_{k'}^{(\ell+1)}).$ }

\medskip

{\it Acknowledgements:} I thank M. Rapoport for helpful remarks on this paper.

\section{Some preparations}
In this section we recall some results concerning the relation of period domains to Geometric Invariant Theory (GIT).

Let $G$ be a reductive group over $k$ and let $\N=\{\nu\}$ be a conjugacy class of $\mQ$-1-PS of $G_{\bar{k}}$.
We abbreviate $\Fb=\boldsymbol{\F} (G, \N).$ 
We fix an {\it invariant inner product} $(\,,\,)$ on  $G$ over $k$.
Recall that this is a
positive-definite bilinear form $(\,,\,)$ on $X_\ast (T)_\mQ$ for any maximal torus $T$
of $G$ defined over $\overline{k}$. The following conditions are required:

(i) For $g\in G (\overline{k})$, the inner automorphism $\Int (g)$ induces an
isometry
\begin{equation*}
\Int (g) : X_\ast (T)_\mQ\to X_\ast (T^g)\spa ,\spa T^g = g\cdot T\cdot g^{-1} \ .
\end{equation*}

(ii) Any $\sigma\in\Gamma$ induces an isometry
\begin{equation*}
\sigma : X_\ast (T)_\mQ\to X_\ast (T^\sigma)_\mQ \ .
\end{equation*}
The choice of such an inner invariant product induces together with the standard pairing $\langle\,,\,\rangle : X_\ast(T)_\mQ \times X^\ast(T)_\mQ \rightarrow \mQ$ an identification $X_\ast(T)_\mQ \cong X^\ast(T)_\mQ$ for all maximal tori $T$ of $G$ defined over $\overline{k}$.
To the pair $(G,\N)$ there is attached an ample homogeneous $\mQ$-line bundle
$\L$ on $\boldsymbol{\F}$ given by
\begin{equation*}
\L = G {\times}^P \mG_a,_{ -\nu^\ast} \spa .
\end{equation*}
Here $\nu^\ast$ denotes the rational character of $T$ which corresponds to $\nu$ under the above identification  (it extends to a character of $P$).
The following theorem of Totaro  \cite{To} describes the semi-stable points $\Fb^{ss}$ inside $\Fb$  via GIT. Here we denote by $\mu^\L (x, \lambda)$ the slope of $x\in \Fb(\bar{k})$ with respect to the 1-PS  $\lambda$ and the ample line bundle $\L$ in the sense of  GIT, cf. \cite{MFK}.
\begin{thm}\label{HM}
Let $x\in\boldsymbol{\F} (\bar{k})$.  Then $x\in \Fb^{ss}(\bar{k})$  if and only if for all $1$-PS $\lambda$ of $G_{\rm der}$ defined over
$k$ the Hilbert-Mumford inequality holds, i.e.
\begin{equation*}
\mu^\L (x, \lambda)\geq 0\spa .
\end{equation*}
\end{thm}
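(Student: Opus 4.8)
The plan is to reformulate both conditions as properties of the filtered vector space underlying $\F_x({\rm Lie}(G)_{\bar{k}})$ and to connect them through an explicit formula for the GIT slope. By the adjoint-invariance of period domains recalled in the introduction I first reduce to the case $G=G_{\rm ad}$ (replacing $\L$ by the corresponding line bundle on $\Fb(G_{\rm ad},\N_{\rm ad})=\Fb(G,\N)$, which rescales $\mu^{\L}$ by a positive rational); then $G_{\rm der}=G$, and since $\det({\rm Ad})$ is the trivial character one has $\deg\F_x({\rm Lie}(G)_{\bar{k}}):=\sum_y y\cdot\dim\gr^y_{\F_x}({\rm Lie}(G)_{\bar{k}})=0$. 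Hence the semi-stability of $x$ is equivalent to the assertion that $\deg\F_x(U):=\sum_y y\cdot\dim\gr^y_{\F_x|U_{\bar{k}}}(U_{\bar{k}})\le 0$ for every $k$-subspace $U\subseteq{\rm Lie}(G)$.

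The heart of the matter is the formula
\begin{equation*}
\mu^{\L}(x,\lambda)=-\deg\F_x\big({\rm Lie}(P(\lambda))_{\bar{k}}\big)
\end{equation*}
for an arbitrary $1$-PS $\lambda$ of $G_{\bar{k}}$ (this holds as stated for $\lambda$ a primitive $1$-PS, and for general $\lambda$ up to a positive rational multiple, which does not affect the sign). I would prove it by direct computation: unwinding $\L=G\times^P\mG_{a,-\nu^\ast}$, one locates the limit point $y=\lim_{t\to 0}\lambda(t)\cdot x=hP$ (so that $h^{-1}\lambda h$ is a $1$-PS of $P$) and reads off the weight of the induced $\mG_m$-action on the line $\L_y$ as $-\langle\nu^\ast,\,h^{-1}\lambda h\rangle$; using the identification $X_\ast(T)_\mQ\cong X^\ast(T)_\mQ$ given by the invariant inner product, together with the description of $\F_x$ on ${\rm Lie}(G)$ via the $\nu$-weights, one rewrites this as $-\deg\F_x({\rm Lie}(P(\lambda))_{\bar{k}})$. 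The only genuine care needed here is with signs and normalizations; the computation is in substance that of \cite{MFK}, and this is the content of Totaro's result \cite{To}.

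Granting the formula, one implication is immediate. If $\lambda$ is a $1$-PS of $G$ defined over $k$, then $Q=P(\lambda)$ is a parabolic subgroup defined over $k$, hence ${\rm Lie}(Q)$ is a $k$-subspace of ${\rm Lie}(G)$; so if $x$ is semi-stable then $\deg\F_x({\rm Lie}(Q)_{\bar{k}})\le 0$, i.e. $\mu^{\L}(x,\lambda)\ge 0$.

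For the converse I argue by contraposition. Suppose $\F_x({\rm Lie}(G)_{\bar{k}})$ is not semi-stable, and let $0=W_0\subsetneq W_1\subsetneq\cdots\subsetneq W_r={\rm Lie}(G)$ be its Harder--Narasimhan filtration, formed among $k$-subspaces with respect to the slope $U\mapsto\deg\F_x(U)/\dim U$; the $W_i$ are then canonical $k$-subspaces. The decisive input --- and the step I expect to be the main obstacle --- is an instability theorem of Behrend / Ramanan--Ramanathan type for the adjoint representation, in the form available for filtrations of fibre functors in \cite{DOR}: the stabilizer $Q:=\Stab_G(W_\bullet)$ is a parabolic subgroup, defined over $k$ because $W_\bullet$ is, with ${\rm Lie}(Q)=W_j$ for the unique index $j$ satisfying $\mu(W_j/W_{j-1})\ge 0>\mu(W_{j+1}/W_j)$. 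Since the graded slopes are strictly decreasing while the total slope is $0$, all partial slopes $\mu(W_i)$ with $i<r$ are positive; in particular $Q$ is a proper parabolic and $\deg\F_x({\rm Lie}(Q)_{\bar{k}})=\deg\F_x(W_j)>0$. Choosing a dominant $1$-PS $\lambda$ of $G$ defined over $k$ with $P(\lambda)=Q$ --- possible because the $k$-parabolic $Q$ contains a maximal $k$-split torus, all of whose cocharacters are defined over $k$ --- the formula of the second paragraph gives $\mu^{\L}(x,\lambda)=-\deg\F_x({\rm Lie}(Q)_{\bar{k}})<0$, so the Hilbert--Mumford inequality fails. Thus the only non-formal ingredient of the argument is the existence of the $k$-rational instability parabolic $Q$; everything else is GIT bookkeeping and Harder--Narasimhan formalism.
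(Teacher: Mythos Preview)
The paper gives no proof of this statement at all: it is introduced as ``the following theorem of Totaro \cite{To}'' and simply cited. So there is nothing in the paper to compare your argument against; you are attempting to supply what the author takes as a black box.

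Your overall strategy is the right one and is essentially Totaro's: pass to the adjoint group, translate the Hilbert--Mumford slope into a statement about degrees of the filtration $\F_x$ on ${\rm Lie}(G)$, deduce one implication from the fact that ${\rm Lie}\,Q$ is a $k$-subspace whenever $Q$ is a $k$-parabolic, and for the converse produce a destabilizing $k$-rational $1$-PS from the Harder--Narasimhan filtration via the canonical (Kempf/Behrend/Ramanan--Ramanathan) instability parabolic. That last step is indeed the substantive input, and you identify it correctly.

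There is, however, a genuine error in your central formula
\[
\mu^{\L}(x,\lambda)=-\deg\F_x\big({\rm Lie}(P(\lambda))_{\bar{k}}\big).
\]
The right-hand side depends on $\lambda$ only through the parabolic $P(\lambda)$, whereas $\mu^{\L}(x,\lambda)$ does not: two non-proportional primitive $1$-PS lying in the interior of the same Weyl chamber determine the same Borel but different slopes, so your primitivity caveat does not save the identity. The correct expression is a pairing of the two filtrations, schematically $\mu^{\L}(x,\lambda)=-\sum_n n\cdot\deg\F_x\bigl(\gr^n_{\F_\lambda}({\rm Lie}(G))\bigr)$, which is what specializes to the identity $\mu^{\L}(pw[\nu],\omega_i)=-(\omega_i,w\nu)$ recorded immediately after Proposition~\ref{vertices}. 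With this correction both halves of your argument still go through, but note that in the converse you must take for $\lambda$ the $1$-PS encoding the HN slopes themselves (so that the weighted sum is strictly positive), not merely an arbitrary $\lambda$ with $P(\lambda)=Q$.
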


Let $\Delta_k=\{\alpha_1,\ldots,\alpha_d\}$ be the set of relative simple roots with respect to a fixed maximal split torus $S\subset G$ and a Borel subgroup $B\subset G$ containing $S$. Note that $G$ is quasi-split since $k$ is a finite field. Let $T=Z(S)$ be the centralizer of $S$ which is a maximal torus over $k.$ We let $\Delta$ be the set of absolutely simple roots of $G$ with respect to $T\subset B$. Then the relative simple roots are given by 
$\Delta_k=\{ \alpha|S \mid \alpha \in \Delta, \alpha|S \neq 0 \}$, cf. \cite{Ti}. By conjugating  $\nu$ with an element of the (absolute) Weyl group $W$,  we may assume that $\nu$ is contained in the closure of the dominant Weyl chamber, i.e.,
$$\nu \in \bar{C}_\mQ=\{ \lambda \in X_\ast(T)_{\mQ}\mid \langle \lambda , \alpha \rangle \geq 0 \; \forall \alpha \in \Delta \}. $$
We denote by $(\omega_\alpha)_{\alpha \in \Delta} \subset X_\ast(T)_\mQ$ the set of  co-fundamental weights.
Recall that they are defined by $(\omega_\alpha,\beta^\vee)=\delta_{\alpha,\beta}$ for $\alpha,\beta \in \Delta.$
For $1\leq i \leq d$, let 
$$\Psi(\alpha_i)=\{\beta \in \Delta \mid \beta|S=\alpha_i\}.$$ 
We set
\begin{equation}\label{relative_cofundamental}
\omega_i = \sum\nolimits_{\beta \in \Psi(\alpha_i)} \omega_\beta.
\end{equation}
Up to multiplication by a positive scalar these are just the relative fundamental weights.
In \cite{O} we have shown\footnote{Actually, in loc.cit. we considered the dual basis of $\Delta_k$ which consists of certain positive multiples of $(\omega_i)_i.$ This does not affect the statement.} that in Theorem \ref{HM} it suffices to treat the vertices of the spherical Tits-complex \cite{CLT} defined by Curtis, Lehrer and Tits.  Thus
\begin{prop}\label{vertices} Let $x\in\Fb(\bar{k}).$ Then $x\in \Fb^{ss}(\bar{k})$
iff for all $g \in G(k)$ and for all $i$ the inequality $\mu^{\mathcal L}(x, {\rm Int}(g)\circ\omega_i) \geq 0$ is satisfied.
\end{prop}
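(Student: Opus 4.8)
The plan is to derive the Proposition from Theorem~\ref{HM}, using in addition the concavity of the slope function along a relative Weyl chamber --- this is precisely the reduction to the vertices of the Curtis--Lehrer--Tits complex carried out in \cite{O}. One implication is formal: for $g\in G(k)$ and any $i$, the $\mQ$-$1$-PS $\Int(g)\circ\omega_i$ is a $\mQ$-$1$-PS of $G_{\rm der}$ defined over $k$. Indeed $\Psi(\alpha_i)$ is stable under $\Gamma$, because $\Gamma$ permutes $\Delta$ (the Borel $B$ being defined over $k$) and acts trivially on $X^\ast(S)$; hence $\omega_i=\sum_{\beta\in\Psi(\alpha_i)}\omega_\beta$ is $\Gamma$-invariant, it lies in $X_\ast(G_{\rm der})_\mQ$ by its defining relation $(\omega_\alpha,\beta^\vee)=\delta_{\alpha,\beta}$, and $\Int(g)$ is defined over $k$ and stabilises $G_{\rm der}$. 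Clearing denominators and using that $\mu^{\mathcal L}(x,-)$ is positively homogeneous of degree one, Theorem~\ref{HM} gives $\mu^{\mathcal L}(x,\Int(g)\circ\omega_i)\geq 0$ whenever $x\in\Fb^{ss}(\bar k)$.

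For the converse, assume $\mu^{\mathcal L}(x,\Int(g)\circ\omega_i)\geq 0$ for all $g\in G(k)$ and all $i$, and let $\lambda$ be an arbitrary $1$-PS of $G_{\rm der}$ defined over $k$; by Theorem~\ref{HM} it suffices to prove $\mu^{\mathcal L}(x,\lambda)\geq 0$. First I would normalise $\lambda$. Its parabolic $P(\lambda)$ is defined over $k$, and any two $k$-rational parabolic subgroups of the same type are conjugate under $G(k)$, so there is $h\in G(k)$ with $P(\Int(h)\circ\lambda)=Q_I$ for some $I\subseteq\Delta_k$. Then $\Int(h)\circ\lambda$ is a $k$-rational $1$-PS lying in the centre of the Levi $L_I$, hence in $X_\ast(Z(L_I))^{\Gamma}=X_\ast(S_I)\subseteq X_\ast(S)$, where $S_I\subseteq S$ is the maximal $k$-split subtorus of $Z(L_I)$ (it lies in $S$ because it centralises $S$ and $S$ is maximal $k$-split in $G$). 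Moreover $P(\Int(h)\circ\lambda)\supseteq B$ forces $\langle\Int(h)\circ\lambda,\alpha_j\rangle\geq 0$ for all $j$, for otherwise the relative root group $U_{\alpha_j}\subseteq B$ would fail to lie in $P(\Int(h)\circ\lambda)$. Thus $\Int(h)\circ\lambda$ lies in the closure $\bar C_{k,\mQ}$ of the relative dominant chamber; since the $\omega_i$ are, up to positive scalars, the relative fundamental coweights that span this chamber, we may write $\Int(h)\circ\lambda=\sum_{i=1}^d c_i\,\omega_i$ with all $c_i\geq 0$.

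The key ingredient, and the point I expect to be the main obstacle, is the theorem of \cite{O} that for fixed $x$ the slope function $\lambda'\mapsto\mu^{\mathcal L}(x,\lambda')$ is linear --- in particular concave --- on each closed relative Weyl chamber, equivalently on each $G(k)$-translate of $\bar C_{k,\mQ}$; this rests on the explicit description of $\mu^{\mathcal L}(x,\lambda')$ through the relative position of the filtration $\F_x$ and the filtration attached to $\lambda'$, and it is here that the combinatorics of the spherical Tits complex enters. Granting it, linearity on the chamber $\Int(h^{-1})(\bar C_{k,\mQ})$ gives
\begin{equation*}
\mu^{\mathcal L}(x,\lambda)=\mu^{\mathcal L}\Bigl(x,\Int(h^{-1})\circ\bigl(\textstyle\sum_{i=1}^d c_i\,\omega_i\bigr)\Bigr)=\sum_{i=1}^d c_i\,\mu^{\mathcal L}\bigl(x,\Int(h^{-1})\circ\omega_i\bigr)\geq 0,
\end{equation*}
the last inequality being the hypothesis applied with $g=h^{-1}$ (and concavity alone would already suffice here, yielding ``$\geq$'' in the middle). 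By Theorem~\ref{HM} this shows $x\in\Fb^{ss}(\bar k)$, completing the proof. Beyond invoking \cite{O}, the only care required is to keep track of $h$ so that linearity is used on the correct chamber, and to carry along the positive scalars relating the $\omega_i$ to the relative fundamental coweights; as recorded in the footnote above, neither affects the signs of the slopes.
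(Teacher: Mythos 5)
The paper itself gives no proof of this proposition --- it simply cites \cite{O} (with a footnote on the normalisation of the $\omega_i$) for the reduction of the Hilbert--Mumford test to the vertices of the Curtis--Lehrer--Tits complex. Your proposal is a reasonable reconstruction of the argument underlying that citation, and its overall structure is sound: the forward implication is a specialisation of Theorem~\ref{HM} to the $k$-rational $\mQ$-$1$-PS $\Int(g)\circ\omega_i$ of $G_{\rm der}$, and the converse is obtained by conjugating an arbitrary $k$-rational $\lambda$ into the relative dominant chamber and using that the slope is linear there. One terminological caveat: globally on $X_\ast(T)_\mQ$ the function $\mu^{\mathcal L}(x,-)$ is \emph{convex} (a maximum of linear functionals), not concave; what the argument actually uses is that this convex, piecewise linear function restricts to a \emph{linear} (hence in particular concave) function on each closed Weyl chamber, and that $\bar C_{k,\mQ}\subseteq\bar C_\mQ$ so the same holds on the relative chamber. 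Your ``granting linearity from \cite{O}'' step is therefore correctly placed, but your parenthetical ``in particular concave'' should not be read as a global statement.

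There is one small but genuine gap in the normalisation. From $P(\Int(h)\circ\lambda)=Q_I$ it does not follow that $\Int(h)\circ\lambda\in X_\ast(Z(L_I))$: the Levi $Z_G(\Int(h)\circ\lambda)$ of $Q_I$, in whose centre $\Int(h)\circ\lambda$ does sit, need not be the standard Levi $L_I$ --- it is only a $k$-rational Levi of $Q_I$, conjugate to $L_I$ by some $u\in R_u(Q_I)(k)$. You must replace $h$ by $uh$ before the chain $X_\ast(Z(L_I))^{\Gamma}=X_\ast(S_I)\subseteq X_\ast(S)$ is available. Cleaner still is to bypass parabolics: $\lambda(\mG_m)$ is a $k$-split torus of $G_{\rm der}$, hence lies in some maximal $k$-split torus, and all such are $G(k)$-conjugate, so $\Int(h)\circ\lambda\in X_\ast(S)_\mQ$ for suitable $h\in G(k)$; a further conjugation by a representative in $N_G(S)(k)$ of a relative Weyl group element puts it into $\bar C_{k,\mQ}$. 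With this correction, the remainder of your argument goes through as written and reproduces the reduction the paper attributes to \cite{O}.
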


We consider the closed complement  $Y:=\Fb \setminus \Fb^{ss}$ of $\Fb^{ss}.$
For any integer $1\leq i \leq d,$ we set
$$ Y_i(\bar{k}):=\{x \in \Fb(\bar{k}) \mid  \mu^{\mathcal L}(x,\omega_i) < 0 \}.$$
The sets $Y_i(\bar{k})$ are induced by closed subvarieties $Y_i$ of $Y$ which are defined over $E.$ 
Let $P_i=P(\omega_i)$ be the parabolic subgroup corresponding to $\omega_i.$ If $n\in \mN$ is some integer such that $n\omega_i\in X_\ast(T)$,
then 
$$P(\omega_i)(\bar{k})=\{g\in G(\bar{k})\mid \lim\nolimits_{t\rightarrow 0} {\rm Int}(n\omega_i(t))\circ g\, \mbox{ exists  in }\, G(\bar{k}) \},$$ cf. \cite{MFK}. This definition does not depend on $n$ and $P_i$ is defined over $k$ since $\omega_i\in X_\ast(S)_\mQ$. 
The natural action of $G$ on $\Fb$ restricts to an action of $P_i$ on
  $Y_i$ for every $i$.
It is a consequence of Prop. \ref{vertices} that we can write $Y$ as the  union
\begin{equation}\label{union}
 Y=\bigcup\nolimits_{i=1,\ldots,d}\bigcup\nolimits_{g \in G(k)} gY_{i}.
\end{equation}

In \cite{O} we proved that the varieties $Y_i$ are unions of Schubert cells. More precisely,  denote by $W_P\subset W$ the parabolic subgroup induced by $P.$ We identify the elements of $W^P:=W/W_P$ with representatives of shortest length in $W$.
\begin{prop} We have  
\begin{eqnarray*} Y_i &=& \bigcup_{w \in W^P \atop  (\omega_i,w\nu)> 0}
  P_iw P/P \\ &=& \bigcup_{w\in W^P \atop (\omega_i,w\nu)> 0 } BwP/P.
\end{eqnarray*}
\end{prop}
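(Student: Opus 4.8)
The plan is to determine the set $Y_i(\bar{k})=\{x\in\Fb(\bar{k})\mid\mu^\L(x,\omega_i)<0\}$ by computing the GIT slope function $x\mapsto\mu^\L(x,\omega_i)$ on $\Fb=G_E/P$ explicitly. Two observations drive the computation: (a) this function is constant along $P_i$-orbits, and (b) on the $P_i$-orbit through a $T$-fixed point $wP/P$ its value is read off from the $T$-weight of $\L$ at $wP/P$. Note first that the two descriptions of $Y_i$ in the statement are a priori equivalent: since $B\subset P_i$, every Schubert cell $BwP/P$ lies in the single $P_i$-orbit $P_iwP/P$, one has $\Fb=\bigsqcup_{w\in W^P}BwP/P$ and, coarsely, $\Fb=\bigsqcup_{W_{P_i}wW_P}P_iwP/P$, and $W_{P_i}$ fixes $\omega_i$ while $W_P$ fixes $\nu$, so $(\omega_i,w\nu)$ depends only on the double coset $W_{P_i}wW_P$. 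Thus it suffices to prove $\mu^\L(\cdot,\omega_i)\equiv-(\omega_i,w\nu)$ on $P_iwP/P$ for each $w\in W^P$.

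\emph{Step 1 ($P_i$-invariance).} Fix $n\in\mN$ with $\lambda:=n\omega_i\in X_\ast(S)\subset X_\ast(T)$; since $\mu^\L(x,\cdot)$ is homogeneous in the $1$-PS it is enough to work with $\lambda$, and $P_i=P(\omega_i)=P(\lambda)$. Let $p\in P(\lambda)(\bar{k})$ and $x\in\Fb(\bar{k})$. As $\Fb$ is projective, $x_0:=\lim_{t\to0}\lambda(t)x$ exists; as $p\in P(\lambda)$, $p_0:=\lim_{t\to0}\Int(\lambda(t))(p)$ exists and lies in $L(\lambda)=Z(\lambda)$. Hence $\lim_{t\to0}\lambda(t)(px)=p_0x_0$, and since $p_0$ commutes with $\lambda(\mG_m)$ the isomorphism $\L_{x_0}\xto{\sim}\L_{p_0x_0}$ given by $p_0$ is $\mG_m$-equivariant; therefore $\lambda(\mG_m)$ acts with the same weight on the two fibres, and by the definition of the slope (cf.\ \cite{MFK}) $\mu^\L(px,\lambda)=\mu^\L(x,\lambda)$. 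So $\mu^\L(\cdot,\omega_i)$ is $P_i$-invariant.

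\emph{Step 2 (value at a $T$-fixed point) and conclusion.} Let $w\in W$ and lift $w$ to $n_w\in N(T)(\bar{k})$. Since $\omega_i(\mG_m)\subset T$ fixes $wP/P$, the slope $\mu^\L(wP/P,\omega_i)$ equals $-\tfrac1n$ times the weight by which $n\omega_i(\mG_m)$ acts on the fibre $\L_{wP/P}$. Unwinding $\L=G\times^P\mG_{a,-\nu^\ast}$, the maximal torus $T$ acts on $\L_{wP/P}$ through the character $-w\nu^\ast$. Pairing against $\omega_i$ and using the fixed identification $X_\ast(T)_\mQ\cong X^\ast(T)_\mQ$ (which $W$ respects, so $(w\nu)^\ast=w(\nu^\ast)$ and $\langle\omega_i,w\nu^\ast\rangle=(\omega_i,w\nu)$) gives
$$\mu^\L(wP/P,\omega_i)=-(\omega_i,w\nu).$$
By Step 1 the same constant value is taken on all of $P_iwP/P$ (which contains $wP/P$), hence on all of $BwP/P$. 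Running over $w\in W^P$,
$$Y_i(\bar{k})=\{x\in\Fb(\bar{k})\mid\mu^\L(x,\omega_i)<0\}=\bigcup_{w\in W^P\atop(\omega_i,w\nu)>0}P_iwP/P=\bigcup_{w\in W^P\atop(\omega_i,w\nu)>0}BwP/P,$$
which is the assertion.

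The delicate point is Step 2, namely getting the $T$-weight of $\L$ at the fixed points, and in particular its sign, exactly right: one must commit to definite conventions for the associated bundle $G\times^P(-)$, for the identification $X_\ast(T)_\mQ\cong X^\ast(T)_\mQ$, and for the sign in Mumford's $\mu^\L$, because an error there swaps the strict inequalities in the statement. The sign is pinned down by any sanity check, e.g.\ the base point $eP/P$ carries the filtration $\F_\nu$, which (for $\nu\neq0$ in the adjoint group) is destabilized by the nilradical of $P(\nu)$ and hence is non-semi-stable; this forces $\mu^\L(eP/P,\omega_i)=-(\omega_i,\nu)\leq0$ with strict inequality for at least one $i$, consistent with the formula above since $\nu$ and the $\omega_i$ are dominant. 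Once the conventions are fixed, Step 1 and the double-coset bookkeeping are routine.
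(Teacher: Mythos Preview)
Your proof is correct and follows the same approach as the paper: both establish the identity $\mu^{\mathcal L}(pw[\nu],\omega_i)=-(\omega_i,w\nu)$ for $p\in P_i(\bar k)$ and deduce the proposition from it. The paper merely records this identity (referring to \cite{O} for details), whereas you supply a self-contained argument for it via $P_i$-invariance of the slope and the $T$-weight computation at the fixed points, together with the double-coset bookkeeping showing the $P_i$-orbit and Schubert-cell descriptions agree.
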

The proof follows from the identity 
$$ \mu^{\mathcal L}(pw[\nu],\omega_i)= -(\omega_i,w\nu),$$
for all $p\in P_i(\bar{k})$, $w\in W.$ Here $[\nu]$ denotes the point of $\Fb(E)$ induced by $\nu.$

We conclude by (\ref{union}) that
$$\dim Y = \max_{i=1,\ldots,d} \dim Y_i. $$
On the other hand, each subvariety $Y_i$ is a union of the Schubert cells $BwP/P$, $w\in W^P,$ with $(\omega_i,w\nu)> 0$. The dimension of $BwP/P$ is $\ell(w)$, cf. \cite{Bo}.  Thus we deduce that
\begin{equation}\label{dimY_i}
 \dim Y_i = \max\; \{\ell(w) \mid w\in W^P,\, (\omega_i,w\nu) > 0 \}.
\end{equation}

Let $w_0$ resp. $w_0^P$ be the longest element of the Weyl group $W$ resp. of $W^P.$ Then $w_0=w_0^P\cdot w_P$ where $w_P$
is the longest element in $W_P.$ In particular 
\begin{equation}\label{wnu}
w_0\nu=w_0^P \nu
\end{equation}
and
\begin{equation}\label{dim}
 \dim \Fb = \ell(w_0^P).
\end{equation}
We shall examine in the next section when it happens
that $\dim Y=\dim \Fb - 1,$ i.e., $\codim Y=1.$

\section{The proof of Theorem 1}

From now on we assume that $G$ is $k$-simple adjoint, i.e., $G= {\rm Res}_{k'/k}G'$ for some
finite extension $k'/k$ of degree $t,$ cf. \cite{Ti}. Let $\ell$ be the (absolute) rank of $G'$. We start with the case where $G$ is absolutely simple adjoint  i.e., $k'=k.$

\begin{prop}\label{prop1}
 Let $G$ be absolutely  simple adjoint over $k$. Then
 $\codim Y \geq 2$ unless $G={\rm PGL}_{\ell+1}$
and $\nu=(x_1\geq x_2 \geq \ldots \geq x_\ell  \geq x_{\ell+1})\in (\mQ^{\ell+1})^0_+$ with $x_2<0$ or $x_\ell>0$.
\end{prop}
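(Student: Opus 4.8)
The strategy is to reduce the codimension computation to a combinatorial question about the Weyl group and the position of $\nu$, using the formulas \eqref{dimY_i}, \eqref{wnu} and \eqref{dim} already established. By \eqref{dim} we have $\dim\Fb=\ell(w_0^P)$, and by \eqref{dimY_i} we must understand, for each simple relative root index $i$, the maximal length of $w\in W^P$ with $(\omega_i,w\nu)>0$; then $\codim Y=1$ precisely when $\max_i\dim Y_i=\ell(w_0^P)-1$. Since $G$ is absolutely simple we have $S=T$, $\Delta_k=\Delta$, and the $\omega_i$ are the genuine co-fundamental weights. The first step is therefore to rewrite the condition $(\omega_i,w\nu)>0$: under the identification $X_\ast(T)_\mQ\cong X^\ast(T)_\mQ$ induced by the invariant inner product, $(\omega_i,w\nu)=\langle \omega_i, w\nu\rangle$ is (up to a positive scalar) the $\alpha_i$-coordinate of $w\nu$ written in the basis of simple coroots, i.e.\ it detects whether $w\nu$ has a strictly positive $\alpha_i^\vee$-component.

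The key step is to show that $\dim Y_i\le \ell(w_0^P)-2$ for \emph{every} $i$ in all cases except the excluded ones, and conversely that one $Y_i$ achieves $\ell(w_0^P)-1$ in the PGL case with $x_2<0$ or $x_\ell>0$. I would argue as follows. An element $w\in W^P$ with $\ell(w)=\ell(w_0^P)-1$ is obtained from $w_0^P$ by deleting one node from a reduced word, so $w=w_0^P s$ for a simple reflection $s=s_\alpha$ with $w_0^P s\in W^P$ (equivalently $\alpha$ occurs at the end of some reduced expression of $w_0^P$). For such $w$ one computes $w\nu = w_0^P s_\alpha\nu = w_0^P\nu - \langle\nu,\alpha\rangle\, w_0^P\alpha^\vee = w_0\nu - \langle\nu,\alpha\rangle\,w_0^P\alpha^\vee$. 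Since $w_0\nu=-\nu^+$ where $\nu^+$ is the dominant representative conjugate to $-w_0\nu$ — more precisely $w_0\nu$ is anti-dominant — the vector $w_0\nu$ has all coordinates $\le 0$ in the fundamental-weight basis, hence $(\omega_i,w_0\nu)\le 0$ for all $i$; so $(\omega_i,w\nu)>0$ forces the correction term $-\langle\nu,\alpha\rangle(\omega_i, w_0^P\alpha^\vee)$ to be strictly positive, which pins down both which $\alpha$ can occur (those with $\langle\nu,\alpha\rangle>0$, i.e.\ $\alpha$ not in the Levi of $P$) and which $i$. One then checks, Dynkin-diagram by Dynkin-diagram and using that $\nu$ lies in $\bar C_\mQ$ with the extra hypothesis that it is not of the special PGL type, that no such pair $(i,w)$ exists — the obstruction being essentially that the single "slid" coroot $w_0^P\alpha^\vee$ is spread over several fundamental weights with the wrong signs unless the diagram is of type $A$ and $P$ is a maximal parabolic of a very particular kind.

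I would organize the case analysis by first disposing of all $w$ of length $\le \ell(w_0^P)-2$ trivially (they contribute codimension $\ge 2$ automatically), so that only the length-$(\ell(w_0^P)-1)$ elements matter; then reduce to $P$ maximal, since if $P$ is contained in two distinct maximal parabolics the "boundary" of $W^P$ in this length has extra room and one shows $\codim Y\ge 2$ directly. The genuinely delicate part, and the main obstacle, is the exhaustive check over the absolutely simple types $B,C,D,E,F,G$ (and non-special parabolics in type $A$) that the sign conditions can never be met: this is where one must use the classification of Dynkin diagrams together with the explicit description of $w_0^P$ and of which fundamental weights appear in $w_0^P\alpha^\vee$. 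In type $A_\ell$ with $G=\mathrm{PGL}_{\ell+1}$ one instead exhibits the solution explicitly: the condition $x_2<0$ (resp.\ $x_\ell>0$) is exactly what makes $\nu$ para-equivalent to a $1$-PS defining a maximal parabolic of the form $P(\omega_1)$ (resp.\ $P(\omega_\ell)$), i.e.\ the Drinfeld-type parabolic, and a direct length count in the symmetric group then gives $\dim Y_i=\ell(w_0^P)-1$ for the appropriate $i$, establishing $\codim Y=1$ and identifying $\Fb^{ss}$ with $\Omega_k^{(\ell+1)}$.
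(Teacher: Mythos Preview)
Your overall plan---reduce to Weyl-group elements of length $\ell(w_0^P)-1$ and then run a case analysis by Dynkin type---is the same as the paper's, but the parametrization you use for those elements is wrong, and this is a genuine gap. You assert that every $w\in W^P$ with $\ell(w)=\ell(w_0^P)-1$ has the form $w_0^P s_\alpha$ for a simple reflection $s_\alpha$ (right multiplication). This already fails for $W=S_4$ with $W_P=\langle s_3\rangle$: here $w_0^P=[4312]$, and the two elements of $W^P$ of length $4$ are $[3412]$ and $[4213]$. One has $[3412]=w_0^Ps_1$, but $[4213]$ is not $w_0^Ps_\alpha$ for any simple $\alpha$ (indeed $w_0^Ps_2=[4132]\notin W^P$ and $w_0^Ps_3=w_0$). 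Your check therefore misses $[4213]$, so even a flawless case analysis afterwards would not establish $(\omega_i,w\nu)\le 0$ for all relevant $w$.

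The correct parametrization is by \emph{left} multiplication: every $w\in W^P$ of length $\ell(w_0^P)-1$ equals $s_\beta w_0^P$ for some simple $\beta$. (Reason: $ww_P$ has length $\ell(w)+\ell(w_P)=\ell(w_0)-1$, hence $ww_P=s_\beta w_0=s_\beta w_0^P w_P$.) With this in hand the paper moves the reflection across the invariant pairing,
\[
(\omega_i,\,s_\beta w_0\nu)=(s_\beta\omega_i,\,w_0\nu),
\]
and uses $s_\beta\omega_i=\omega_i$ unless $\beta=\alpha_i$, in which case $s_\beta\omega_i=\omega_i-\alpha_i$. This forces $\beta=\alpha_i$ and reduces everything to the single inequality $(\omega_i-\alpha_i,\,w_0\nu)>0$, which is \emph{independent of $P$}. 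The case analysis is then short: explicit in type $A_\ell$, and for the remaining types it follows from $(\omega_i,\omega_j)\ge 0$ together with $(\omega_i,\omega_i)\ge\tfrac12(\alpha_i,\alpha_i)$. Your route via $(\omega_i,\,w_0^P\alpha^\vee)$ would instead require knowing $w_0^P$ explicitly for every parabolic $P$, which makes the analysis much heavier.

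Two smaller issues. The ``reduction to $P$ maximal'' is neither needed nor valid: when $P$ is smaller there are \emph{more} length-$(\ell(w_0^P)-1)$ elements, not fewer, so nothing simplifies. And $x_2<0$ does not force $P(\nu)$ to be a maximal parabolic (e.g.\ $\nu=(3,-1,-2)$ in type $A_2$ has $P(\nu)=B$); in the codimension-one situation $\Fb^{ss}$ is generally not $\Omega_k^{(\ell+1)}$ itself but only fibres over it with flag-variety fibres.
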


\begin{proof}
The elements of length $\ell(w_0)-1$ in $W$ are given by the expressions $s w_0$, where $s\in W$ is a simple reflection.
We deduce from (\ref{dimY_i}) -  (\ref{dim})   that there  is some integer $1\leq i \leq d$  with ${\rm codim} Y_i=1$, if and only if there is a simple reflection $s_\beta\in W,\, \beta\in \Delta,$ with
\begin{equation}\label{inequality0}
(\omega_i,s_\beta w_0\nu)> 0 .
\end{equation}
By the equivariance of $(\,,\,)$ we get
\begin{equation}\label{equivariance}
(\omega_i,s_\beta w_0\nu)=(s_\beta\omega_i,w_0\nu).
\end{equation}

\noindent {\it $1^{st}$ case:} $G$ is split.

\smallskip
Thus we have $\Delta_k=\Delta.$ 
Further, by \cite{Bou} ch. VI, 1.10, we have\footnote{Here we make use of the identification $X_\ast(T)_\mQ=X^\ast(T)_\mQ $} 
$$s_\beta\omega_i=\left\{\begin{array}{cc}
 \omega_i & \mbox{ if } \beta\neq \alpha_i \\ \omega_i-\alpha_i & \mbox{ if }\beta = \alpha_i
\end{array} \right..
 $$
Since $w_0\nu\in - \bar{C}_\mQ$ we get $(\omega_i,w_0\nu)<0$. Thus we conclude that $\beta=\alpha_i$ is a necessary condition in order that (\ref{inequality0}) holds.
Further, in this situation we get by (\ref{equivariance}) $ (\omega_i,s_\beta w_0\nu)> 0$ if and only if
\begin{equation}\label{inequality1} 
(\omega_i,w_0\nu)> (\alpha_i,w_0\nu).
\end{equation}

We start to investigate  inequality (\ref{inequality1}) for the root system of type  $A_\ell (\ell\geq 1).$
In this case the data is given as follows:
\begin{eqnarray*}
 \alpha_i & = & \epsilon_i - \epsilon_{i+1}, \; i=1,\ldots,\ell, \\
\omega_i & = & \dfrac{1}{\ell+1}\big((\ell+1-i)^{(i)},-i^{(\ell+1-i)}\big), \; i=1,\ldots,\ell,\\
\bar{C}_\mQ & = & (\mQ^{\ell+1})^0_+.
\end{eqnarray*}
Here in the definition of $\omega_i$ the exponent $(j)$  means that we repeat the corresponding entry $j$ times.
Further, $w_0$ acts on $\mQ^{\ell+1}$ via $$w_0(x_1,x_2,\ldots, x_{\ell+1}) = (x_{\ell+1}, x_\ell,\ldots,x_1).$$ 
Let $\nu=(x_1 \geq x_2 \geq \ldots \geq x_{\ell+1})\in(\mQ^{\ell+1})^0_+.$ 
Then 
$$(\omega_i,w_0\nu)= x_{\ell +1} + \ldots + x_{\ell-i+2}$$ and
$$(\alpha_i,w_0\nu)=x_{\ell-i+2} - x_{\ell-i+1}.$$
Thus  inequality (\ref{inequality1}) is satisfied if and only if
\begin{equation}\label{inequality2}
 x_{\ell+1} + \ldots + x_{\ell-i+3} > - x_{\ell-i+1}  \; \mbox{ if }\; 1< i< \ell 
\end{equation}
resp. 
$$x_{\ell}>0 \; \mbox{ if }\; i=1 $$ 
resp.
$$x_2 < 0  \; \mbox{ if }\; i=\ell  . $$
Let $1 <i < \ell.$ Then
$$x_{1} + \ldots + x_{\ell-i} + x_{\ell-i+2}  \geq x_{\ell+1} + \ldots + x_{\ell-i+3} + x_{\ell-i+1}$$
as $x_{\ell-i+2} \geq x_{\ell-i+3},\, x_{\ell-i} \geq x_{\ell-i+1}$ and $\sum_{j=1}^{\ell-i-1} x_j \geq 0$ resp.
$\sum_{j=0}^{i-3} x_{\ell+1-j} \leq 0.$ Thus (\ref{inequality2})  cannot be satisfied if $1<i<\ell$ since the sum over all entries in $\nu$  vanishes. Hence the proof follows in the case of the root system $A_\ell(\ell \geq 1).$ 

For the other split root systems, i.e., of type $B_\ell,C_\ell,D_\ell, E_6,E_7,E_8,F_4,G_2,$ we proceed as follows. 
We write down $\nu=\sum_{i=1}^\ell n_i\omega_i$ as linear combination of the co-fundamental weights with non-negative coefficients $n_i\geq 0.$ Note that $n_i=(\nu,\alpha^\vee_i),\, i=1,\ldots,\ell.$
We get 
$$w_0\nu=-\sum\nolimits_{j=1}^\ell n_{j}\omega_{\tau(j)}.$$
where $\tau$ is the opposition involution  of $\{1,\ldots,\ell\}$, cf. \cite{Ti}.  In the case of $B_\ell,C_\ell,$ $D_\ell \mbox{($\ell$ even)}, E_7,E_8,F_4,G_2$ we have $\tau={\rm id}.$ For $D_\ell \mbox{($\ell$ odd)}$, we have $\tau=(\ell-1,\ell).$  Finally in the case $E_6$ we have $\tau = (1,6)(2,5)(3,4).$
In all cases 
$$(\omega_i,w_0\nu)= -\sum\nolimits_{j=1}^\ell n_{j}(\omega_i,\omega_{\tau(j)}).$$
and
$$(\alpha_i,w_0\nu)=-n_{\tau^{-1}(i)}\cdot \frac{1}{2}\cdot (\alpha_i,\alpha_i)$$
as $\alpha_i^\vee=\frac{2\alpha_i}{(\alpha_i,\alpha_i)}.$
Since $(\omega_i,\omega_j) \geq 0$ for all $i,j, $ cf. \cite{Bou}, ch. VI, 1.10, we get
\begin{equation}\label{nocheineUngleichung}
(\omega_i,w_0\nu) \leq  -n_{\tau^{-1}(i)}\cdot (\omega_i,\omega_i).
\end{equation}
Further one checks case by case by the explicit representation of the co-fundamental weights in loc.cit. p. 265-290,  that $$(\omega_i,\omega_i) \geq \frac{1}{2}\cdot (\alpha_i,\alpha_i) \,\mbox{ for } i=1,\ldots,\ell.$$
Hence we get by using (\ref{nocheineUngleichung})
$$(\omega_i,w_0\nu) \leq  (\alpha_i,w_0\nu).$$
Thus we deduce that the inequality (\ref{inequality1}) cannot be satisfied  for root systems different from $A_\ell.$
Let us illustrate this argument for the root system of type $G_2$. Here the data is given by
\begin{eqnarray*}
\alpha_1 & = & \epsilon_1-\epsilon_2,\; \alpha_2  =  -2\epsilon_1 + \epsilon_{2}+\epsilon_3,  \\
\omega_{1} &  = &  \epsilon_3 -\epsilon_2, \;\omega_2  =  -\epsilon_1 - \epsilon_2 +  2\epsilon_3.
\end{eqnarray*}
Let $\nu= n_1\omega_1 + n_2 \omega_2$ with $n_1,n_2 \geq 0.$ 
We get $w_0\nu=- n_1\omega_1 - n_2 \omega_2.$ 
Then 
$$(\omega_1,w_0\nu)= -n_1(\omega_1,\omega_1) - n_2 (\omega_1,\omega_2)=-2n_1-3 n_2$$
and
$$(\omega_2,w_0\nu)= -n_1(\omega_2,\omega_1) - n_2 (\omega_2,\omega_2)=-3n_1-6n_2.$$
Further, we compute 
$$(\alpha_1,w_0\nu)=-n_{1}\cdot \frac{1}{2}\cdot (\alpha_1,\alpha_1)= -n_1 $$
and 
$$(\alpha_2,w_0\nu)=-n_{2}\cdot \frac{1}{2}\cdot (\alpha_2,\alpha_2)= -3n_2 .$$
Hence
$$(\omega_1,w_0\nu)\leq -n_1(\omega_1,\omega_1)=-2 n_1  \leq (\alpha_1,w_0\nu) = -n_1$$
and
$$(\omega_2,w_0\nu)\leq -n_2(\omega_2,\omega_2)=-6 n_2  \leq (\alpha_2,w_0\nu) = -3n_2.$$

\bigskip

\noindent {\it $2^{nd}$ case:} $G$ is not split. 

\smallskip 
Recall that $\omega_i = \sum_{\beta\in \Psi(\alpha_i)}  \omega_\beta,$ cf. (\ref{relative_cofundamental}). We get
$$s_\beta\omega_i=\left\{\begin{array}{cc}
 \omega_i & \mbox{ if } \beta\not\in \Psi(\alpha_i) \\ \omega_i-\beta & \mbox{ if } \beta\in \Psi(\alpha_i) 
\end{array} \right..
 $$
Again we conclude that $\beta\in\Psi(\alpha_i)$ is a necessary condition in order that (\ref{inequality0}) holds.
Further $ (\omega_i,s_\beta w_0\nu)> 0$, if and only if
\begin{equation}\label{inequality3} 
(\omega_i,w_0\nu)> (\beta,w_0\nu). 
\end{equation}
Now we have 
\begin{equation*}\label{inequality4}
(\omega_i,w_0\nu)=\sum\nolimits_{\beta\in \Psi(\alpha_i)}(\omega_\beta,w_0\nu) \leq (\omega_\beta,w_0\nu) \; \mbox{ for all }\beta \in \Psi(\alpha_i). 
\end{equation*}
Thus by the computation in the $1^{st}$ case, we conclude that a necessary condition in order that (\ref{inequality3}) holds is that the root system of $G_{\bar{k}}$ is of type $A_\ell (\ell\geq 1).$

In this case the group $G={\rm PU}_{\ell+1}$ is the projective unitary group of (absolute) rank $\ell$ and $d=[\frac{\ell+1}{2}]$, cf. \cite{Ti}. The co-fundamental weights $(\omega_i)_i$ of ${\rm PU}_{\ell+1}$ are given as follows.
Let $\Delta=\{\beta_1=\epsilon_1-\epsilon_2,\ldots,\beta_\ell=\epsilon_{\ell}-\epsilon_{\ell+1}\}$ be the set of standard simple
roots of type $A_\ell.$ Then 
$$\omega_i=\omega_{\beta_i} + \omega_{\beta_{\ell+1-i}}, \, i=1,\ldots, d-1$$
and 
$$\omega_{d}=\left\{\begin{array}{cc}
                   \omega_{\beta_d} & \mbox{ if } \frac{\ell+1}{2}\in \mZ \\
\omega_{\beta_d}+\omega_{\beta_{d+1}} &   \mbox{ if } \frac{\ell+1}{2} \not\in \mZ                
\end{array} \right. .$$
Thus by the explicit computation in the ${\rm PGL_{\ell+1}}$-case, we see that if inequality (\ref{inequality3}) is satisfied, then we necessarily have $i=1$ and 
$\beta=\beta_1$ or $\beta=\beta_\ell.$ But we compute
$$ (\omega_1,w_0\nu)= x_{\ell+1} - x_1$$
and 
$$ (\beta_1,w_0\nu)= x_{\ell+1} - x_\ell$$
resp.
$$ (\beta_\ell,w_0\nu)= x_{2} - x_1.$$
Hence we see that inequality (\ref{inequality3}) cannot be satisfied for $G={\rm PU}_{\ell+1}$ either.
\end{proof}

Next we determine explicitely  the period domains for which the codimension of the closed complement is 1.
So by Prop. \ref{prop1} we may assume that $G={\rm PGL}_{\ell+1,k}$ and  $\nu=(x_1,x_2,\ldots,x_{\ell+1})\in (\mQ^{\ell+1})^0_+$. We rewrite $\nu$ in the shape $\nu=(y_1^{(n_1)},\ldots,y_r^{(n_r)})$ with
$y_1>y_2> \cdots >y_r$ and $n_i\geq 1,\, i=1,\ldots,r.$ 
Let $V=k^{\ell+1}$. Then $\Fb(G,\N)(\bar{k})$ is given by the set of filtrations
$$(0) \subset \F^{y_1} \subset \F^{y_2} \subset \ldots \subset \F^{y_r} =V_{\bar{k}} $$
with 
$$\dim \F^{y_i}=n_1+\cdots +n_i.$$ 
If $x_2<0$ then $n_1=1$  resp. if $x_\ell>0$ then $n_r=1 .$
In order to determine the period domain, one can replace in the definition of a semi-stable filtration the Lie Algebra ${\rm Lie}(G)$ by $V$, cf. \cite{DOR}.
Thus a point $\F^\bullet$ is semi-stable if for all $k$-subspaces $U$ of $V$
the following inequality is satisfied
$$\frac{1}{\dim U}\Big(\sum\nolimits_y y\cdot \dim \gr_{\F|U_{\bar{k}}}^y(U_{\bar{k}})\Big) \leq \frac{1}{\dim V}\Big(\sum\nolimits_y y\cdot \dim \gr_{\F}^y(V_{\bar{k}})\Big). $$  
Then one  computes easily that
$$\Fb^{ss}(\bar{k})=\{\F^\bullet \in \Fb(\bar{k}) \mid \F^{y_1} \mbox{ is not contained in any $k$-rational hyperplane} \}  $$
resp.
$$\Fb^{ss}(\bar{k})=\{\F^\bullet \in \Fb(\bar{k}) \mid \F^{y_r} \mbox{ does not contain any $k$-rational line} \} . $$
Thus the projections
$$\begin{array}{ccccccc}
 \boldsymbol{\mathcal F} & \rightarrow & \mP^\ell_k & \mbox{ resp. } & \Fb & \rightarrow & \check{\mP}_k^\ell \\\\
\F^\bullet & \mapsto &  \F^{y_1} & & \F^\bullet & \mapsto &  \F^{y_r} 
\end{array}$$
induce surjective  proper maps
\begin{equation}\label{fibration}
 \begin{array}{ccccccc}
 \boldsymbol{\mathcal F}^{ss} & \rightarrow & \Omega_k^{(\ell+1)}  & \mbox{ resp. } & \Fb^{ss} & \rightarrow &  \check{\Omega}_k^{(\ell+1)} \\
\end{array}
\end{equation}
in which the  fibres are generalized flag varieties.

\medskip
{\it Proof of Theorem 1 in the absolute simple case:}  The proof follows from Proposition \ref{prop1} and the following facts on fundamental groups of algebraic varieties. If $\codim Y\geq 2,$ then we get
$\pi_1(\Fb^{ss})=\pi_1(\Fb)=\{1\},$ since $\Fb$ is simply connected, cf.  \cite{SGA1}, ch. XI, Cor. 1.2.
If $\codim Y= 1$ we are in the situation (\ref{fibration}). Then the statement follows from \cite{SGA1} Cor. 6.11 since the fibres of the maps (\ref{fibration}) are simply connected.
Note that the fundamental groups of $\Omega_k^{(\ell+1)}$ and $\check{\Omega}_k^{(\ell+1)}$  are the same since both varieties are isomorphic. 
\qed

Now we consider the general case of an $k$-simple adjoint group $G$.

\begin{prop}\label{Propo2}
Let $G={\rm Res}_{k'/k} G'$  be $k$-simple adjoint.
Then $\codim Y \geq 2$ unless $G'={\rm PGL_{\ell+1}}$ 
and there is a unique $1\leq j \leq t$, such that the following two conditions are satisfied. Let $\nu_j=(x^{[j]}_1\geq x^{[j]}_2 \geq \ldots \geq x^{[j]}_{\ell+1})\in (\mQ^{\ell+1})^0_+,\,j=1,\ldots,t.$
Then 

\smallskip
(i)   $\nu_j$ as in the absolutely simple case, i.e., with $x^{[j]}_2<0$ or $x^{[j]}_\ell>0$.

\smallskip
(ii)  $\sum_{i\neq j}x^{[i]}_{1} < - x_2^{[j]}$ if $x^{[j]}_2<0$ resp. $\sum_{i\neq j}x^{[i]}_{\ell+1}> - x_\ell^{[j]}$ if $x^{[j]}_\ell> 0.$  
\end{prop}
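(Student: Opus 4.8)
The plan is to reduce the Weil restriction case to the absolutely simple case treated in Proposition \ref{prop1}, using that for $G = {\rm Res}_{k'/k}G'$ the absolute root datum is a product of $t = [k':k]$ copies of that of $G'$. Concretely, the set of absolute simple roots decomposes as $\Delta = \coprod_{i=1}^t \Delta^{[i]}$ with each $\Delta^{[i]}$ a copy of the simple roots of $G'_{\bar k}$; the relative roots are obtained by identifying the $t$ copies via the cyclic Galois action, so $d$ equals the number of simple roots of $G'$ and $\Psi(\alpha)$ meets each block $\Delta^{[i]}$ in exactly one element. The invariant inner product is the orthogonal sum over the blocks, $w_0 = (w_0',\dots,w_0')$ acts blockwise (with the same opposition involution in each block), and $\nu = (\nu_1,\dots,\nu_t)$ with $\nu_j$ in the closed dominant chamber of $G'$. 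Hence $\omega_i = \sum_{i'=1}^t \omega_{\beta_{i'}}$ with $\beta_{i'} \in \Delta^{[i']}$, and the pairing $(\omega_i, w_0\nu)$ splits as $\sum_{i'=1}^t (\omega_{\beta_{i'}}, w_0'\nu_{i'})$, while for $\beta \in \Psi(\alpha_i)$ lying in block $j$ we have $(\beta, w_0\nu) = (\beta, w_0'\nu_j)$ since $\beta$ is orthogonal to all other blocks.

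Next I would run the same argument as in the second case of Proposition \ref{prop1}. The condition $\codim Y = 1$ is equivalent, via \eqref{dimY_i}--\eqref{dim} and \eqref{equivariance}, to the existence of $i$ and a simple reflection $s_\beta$ with $\beta \in \Psi(\alpha_i)$, say $\beta$ in block $j$, such that $(\omega_i, s_\beta w_0\nu) > 0$, i.e.
\begin{equation*}
\sum\nolimits_{i'=1}^t (\omega_{\beta_{i'}}, w_0'\nu_{i'}) > (\beta, w_0'\nu_j).
\end{equation*}
For $i' \neq j$ each term $(\omega_{\beta_{i'}}, w_0'\nu_{i'})$ is $\leq 0$ (as $w_0'\nu_{i'} \in -\bar C_\mQ$), so the inequality forces in particular $(\omega_{\beta_j}, w_0'\nu_j) > (\beta, w_0'\nu_j)$, which by the split computation in Proposition \ref{prop1} can only happen when $G'_{\bar k}$ is of type $A_\ell$ and $\beta = \beta_j$, $i = 1$ (the index $1$ or $\ell$ after relabelling), with $\nu_j$ as in (i). Moreover the blockwise splitting shows the reflection must come from a single block, so the offending $j$ is unique. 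It remains to make the inequality explicit: in the $A_\ell$ case $\omega_1 = \tfrac{1}{\ell+1}(\ell^{(1)}, (-1)^{(\ell)})$ and one computes $(\omega_{\beta_{i'}}, w_0'\nu_{i'}) = x^{[i']}_{\ell+1} - \tfrac{1}{\ell+1}\sum_m x^{[i']}_m = x^{[i']}_{\ell+1}$ when $x^{[j]}_\ell > 0$ (resp. $= x^{[i']}_1$ in the dual case $x^{[j]}_2 < 0$, after applying $w_0$), while $(\beta_j, w_0'\nu_j) = x^{[j]}_\ell - x^{[j]}_1$ is not the relevant bound — rather, as in \eqref{inequality2}, the inequality $(\omega_{\beta_j}, w_0'\nu_j) > (\beta_j, w_0'\nu_j)$ for $i=1$ reduces to $x^{[j]}_\ell > 0$, and adding the contributions of the other blocks turns the strict inequality into $\sum_{i' \neq j} x^{[i']}_{\ell+1} > -x^{[j]}_\ell$ (resp. $\sum_{i' \neq j} x^{[i']}_1 < -x^{[j]}_2$). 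This is exactly condition (ii), and uniqueness of $j$ follows because if two blocks satisfied (i)--(ii) simultaneously the sign constraints $\sum_{i'\neq j} x^{[i']}_{\ell+1} > -x^{[j]}_\ell > 0$ together with $\sum_m x^{[i']}_m = 0$ for every block would be contradictory.

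The main obstacle I anticipate is purely bookkeeping: correctly tracking the interaction of the opposition involution $\tau$ (acting blockwise, and nontrivially as the flip on each $A_\ell$ block) with the Galois permutation of the blocks, so as to be sure that only one value of $j$ can contribute and that the two alternatives in (i) and (ii) are the mutually exclusive ones $x^{[j]}_2 < 0$ versus $x^{[j]}_\ell > 0$. Once the blockwise orthogonal decomposition of $(\,,\,)$, $w_0$, and $\nu$ is set up cleanly, the rest is the same two inequalities already handled in Proposition \ref{prop1} plus the elementary estimate turning the sum over the remaining blocks into (ii). Finally, the passage from $\codim Y \geq 2$ (resp. $= 1$) to the statement about $\pi_1(\Fb^{ss})$ is identical to the absolutely simple case: when $\codim Y \geq 2$ one invokes \cite{SGA1} XI Cor. 1.2, and when $\codim Y = 1$ the analogue of the fibration \eqref{fibration} over $\Omega_{k'}^{(\ell+1)}$ (with simply connected fibres) together with \cite{SGA1} Cor. 6.11 gives $\pi_1(\Fb^{ss}) = \pi_1(\Omega_{k'}^{(\ell+1)})$.
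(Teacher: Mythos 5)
Your overall route is the same as the paper's: decompose the absolute root datum into $t$ blocks, use $w_0=(w_0',\dots,w_0')$, note that for $\beta\in\Psi(\alpha_i)$ in block $j$ one has $(\beta,w_0\nu)=(\beta,w_0'\nu_j)$, use the estimate $(\omega_i,w_0\nu)=\sum_{i'}(\omega_{\beta_{i'}},w_0'\nu_{i'})\le(\omega_{\beta_j},w_0'\nu_j)$ to reduce to the split $A_\ell$ case of Proposition~\ref{prop1}, and then compute explicitly to arrive at condition (ii). All of that matches the paper.

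The gap is in your proof of the uniqueness of $j$, which as written does not hold up. You claim the contradiction comes from ``$\sum_{i'\neq j}x^{[i']}_{\ell+1}>-x^{[j]}_\ell>0$'' together with the blocks summing to zero; but if $x^{[j]}_\ell>0$ then $-x^{[j]}_\ell<0$, so no sign contradiction is present — condition (ii) only says the (nonpositive) sum $\sum_{i'\neq j}x^{[i']}_{\ell+1}$ is not too negative. The actual argument needs a genuine chain of inequalities using both (ii) for the two competing indices and the constraints $\sum_m x^{[i]}_m=0$, $x^{[i]}_{\ell+1}\le 0\le x^{[i]}_1$, and $x^{[j]}_\ell+x^{[j]}_{\ell+1}\le 0$. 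Concretely, if $h\neq j$ also satisfied (i)--(ii) in the $x_\ell>0$ form, then assuming WLOG $-x^{[j]}_\ell\le -x^{[h]}_\ell$ one gets
\begin{equation*}
-x^{[j]}_\ell\le -x^{[h]}_\ell<\sum\nolimits_{i\neq h}x^{[i]}_{\ell+1}\le x^{[j]}_{\ell+1}\le -x^{[j]}_\ell,
\end{equation*}
a contradiction; and you must also rule out the mixed case where $h$ satisfies the $x_2<0$ alternative while $j$ satisfies $x_\ell>0$, which requires a separate chain (using $x^{[j]}_1\le\sum_{i\neq h}x^{[i]}_1<-x^{[h]}_2\le -x^{[h]}_{\ell+1}\le -\sum_{i\neq j}x^{[i]}_{\ell+1}<x^{[j]}_\ell$ against $x^{[j]}_1\ge x^{[j]}_\ell$). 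Your remark that ``the reflection must come from a single block'' only localizes a given $\beta$; it does not preclude two different block indices each admitting such a $\beta$, which is what uniqueness asserts. Also note the closing paragraph about $\pi_1$ is not part of Proposition~\ref{Propo2}, which is a statement purely about $\codim Y$.
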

\begin{proof}
We conclude  by the same argument as in the proof of Proposition \ref{prop1}, $2^{nd}$ case, that ${\rm codim} Y_i=1$  if and only if there is a simple root $\beta\in\Psi(\alpha_i)$ such that 
\begin{equation}\label{inequality5} 
(\omega_i,w_0\nu)> (\beta,w_0\nu). 
\end{equation}
Let $\Gal(k'/k)=\{\sigma^j\mid 0\leq j\leq t-1\}$ and denote by $W'$  the Weyl group of $G'$. 
Since $G={\rm Res}_{k'/k} G'$  we have $W=\prod_{j=1}^tW'$ and $w_0=(w_0',\ldots,w_0')\in W$. 
Further, the natural restriction map $\Delta'_{k'} \rightarrow \Delta_k$ is bijective where
$\Delta'_{k'}=\{\alpha_1',\ldots,\alpha_d'\}$ is the set of relative simple roots of $G'$ with respect to a maximal $k'$-split torus $S'$ such that $S(k)\subset S'(k').$  
It follows that $\omega_i=\sum_{j=0}^{t-1} \sigma^j \omega_i'.$ Here $(\omega'_i)_i\in X_\ast(S')_{\mQ}$ is defined with respect to $(\alpha_i')_i\in  X^\ast(S')_{\mQ} .$ 
Furthermore, $\Delta$ is formed by $t$ copies of the set $\Delta'$ of absolute simple roots to $G'.$ We conclude that for each $\beta\in \Psi(\alpha_i)$ there is an index $j(\beta)=j$, $1\leq j\leq t,$ with 
$$(\beta,w_0\nu)=(\beta,w'_0\nu_j).$$ 
For all other indices $h\neq j$, we
have $(\beta,w'_0\nu_h)=0$.
We compute
\begin{equation}\label{inequality6}
(\omega_i,w_0\nu)=\sum\nolimits_{j=0}^{t-1}(\sigma^j\omega'_i,w_0\nu) \leq (\sigma^j\omega'_i,w_0\nu)=(\omega_i',w'_0\nu_j). 
\end{equation}
Thus by the computation in the proof of
Proposition \ref{prop1} we conclude that a necessary condition in order that (\ref{inequality5}) holds is that $G'$ is split
and that the root system of $G'$ is of type $A_\ell (\ell\geq 1).$ 

So let $G'={\rm PGL}_{\ell+1,k'}.$ Then $\Delta$ is given by the set $\{\alpha_i^{[j]} \mid 1\leq i \leq \ell, 1\leq j \leq t\}$, where
$$
\alpha^{[j]}_i  =  \epsilon^{[j]}_i - \epsilon^{[j]}_{i+1}.
$$
Here $\epsilon^{[j]}_i$ is the appropriate coordinate function on $T_{\bar{k}} \cong \prod_{j=1}^tS_{\bar{k}},$ where $S$ is the diagonal torus in ${\rm PGL}_{\ell+1,k'}.$ Furthermore, the sets $\Psi(\alpha_i)$ are given by
$$\Psi(\alpha_i)=\{\alpha_i^{[j]}\mid 1\leq j \leq t\}.$$
Let $\nu=(\nu_1, \ldots,  \nu_{t})\in \bar{C}_\mQ.$ We get $w_0\nu=(w'_0\nu_1,\ldots,w'_0\nu_t)$, where the entries are given by $w'_0\nu_j=(x^{[j]}_{\ell+1},x^{[j]}_{\ell},\ldots,x^{[j]}_1),$ $j=1,\ldots,t.$
In the proof of Proposition \ref{prop1} we have seen that  if the inequalities (\ref{inequality5}) and (\ref{inequality6})  are satisfied then  $\beta=\alpha_1^{[j]}$ and $x^{[j]}_\ell>0$ resp. $\beta=\alpha_\ell^{[j]}$ and $x^{[j]}_2<0$ for some integer $j$ with $1\leq j \leq t.$

Let $\beta = \alpha^{[j]}_1$ and $x^{[j]}_\ell>0$.
Then 
$$(\omega_1,w_0\nu)= \sum_{i=1}^t x^{[i]}_{\ell +1} $$ and 
$$(\beta,w_0\nu)=x^{[j]}_{\ell+1} - x^{[j]}_{\ell}.$$
Thus the inequality (\ref{inequality5}) is satisfied if and only if
$$\sum\nolimits_{i\neq j} x^{[i]}_{\ell+1}  > - x^{[j]}_{\ell}  .$$
Furthermore,  we claim that the integer $j$ is uniquely determined. 
In fact, suppose first that $h$ is another integer with  $1\leq h\leq t$  and
$$\sum\nolimits_{i\neq h} x^{[i]}_{\ell+1}  > - x^{[h]}_{\ell} .$$
Without loss of generality we may assume that $- x^{[j]}_{\ell} \leq - x^{[h]}_{\ell}.$
Then 
$$- x^{[j]}_{\ell} \leq - x^{[h]}_{\ell} < \sum\nolimits_{i\neq h} x^{[i]}_{\ell+1} \leq x^{[j]}_{\ell+1} \leq - x^{[j]}_{\ell},$$
which is a contradiction. Here the latter inequality follows from the fact that $x^{[j]}_{\ell+1}+ x^{[j]}_{\ell} \leq 0$, since $\nu_j\in (\mQ^{\ell+1})^0_+.$

If in the opposite direction $h$ is another integer with  $1\leq h\leq t$  and
$$\sum\nolimits_{i\neq h} x^{[i]}_{1}  < - x^{[h]}_{2}$$
then
$$x^{[j]}_{1}  \leq  \sum\nolimits_{i\neq h} x^{[i]}_{1} < -x^{[h]}_{2} \leq - x^{[h]}_{\ell+1} \leq -\sum\nolimits_{i\neq j} x^{[i]}_{\ell+1}< x^{[j]}_{\ell},$$
which is a contradiction, as well. 

The case $\beta = \alpha^{(j)}_\ell$ and $x^{[j]}_2<0$ behaves dually  and yields $\sum_{i\neq j}x^{[i]}_{1} < - x_2^{[j]}.$  
\end{proof}

Again we determine explicitly the period domains where the codimension of the closed complement is 1.
So let $\nu=(\nu_1,\ldots,\nu_t) \in \bar{C}_\mQ$ such that $\codim Y=1.$   After reindexing we may suppose that $\nu_1\in (\mQ^{\ell+1})^0_+$ is the vector with $\sum_{i\neq 1}x^{[i]}_{1} < - x_2^{[1]}$ or $\sum\nolimits_{i\neq 1} x^{[i]}_{\ell+1}  > - x^{[1]}_{\ell}.$
Over the algebraic closure $\bar{k}$ the flag variety $\Fb(G,\N)$ is the product
$$\Fb(G,\N)_{\bar{k}}=\prod\nolimits_{j=1}^t \Fb({\rm PGL}_{\ell+1,\bar{k}},\N_j)_{\bar{k}}, $$  
where $\N_j$ is the ${\rm PGL}_{\ell+1,\bar{k}}$-conjugacy class of $\nu_j.$
Let $\nu_1=(y_1^{(n_1)},\ldots,y_r^{(n_r)})$ with
$y_1>y_2> \cdots >y_r$ and $n_i\geq 1,\, i=1,\ldots,r.$ 
The corresponding period domain is then given by
$$\Fb(G,\N)^{ss}_{\bar{k}}=\Fb({\rm PGL}_{\ell+1,k'},\N_1)^{ss}_{\bar{k}} \times \prod\nolimits_{j\geq 2} \Fb({\rm PGL}_{\ell+1,k'},\N_j)_{\bar{k}}.$$
In the case $\sum_{i\neq 1}x^{[i]}_{1} < - x_2^{[1]},$ we have
\begin{eqnarray*}
\Fb({\rm PGL}_{\ell+1,k'},\N_1)^{ss}(\bar{k}) & = & \{\F^\bullet \in \Fb({\rm PGL}_{\ell+1,k'},\N_1)(\bar{k}) \mid \F^{y_1} \mbox{ is not contained in} \\ & & \mbox{ any $k'$-rational hyperplane}\}.
\end{eqnarray*}
For $\sum_{i\neq 1}x^{[i]}_{\ell+1} > - x_\ell^{[1]}$, we have
\begin{eqnarray*}
\Fb({\rm PGL}_{\ell+1,k'},\N_1)^{ss}(\bar{k}) & = & \{\F^\bullet \in \Fb({\rm PGL}_{\ell+1,k'},\N_1)(\bar{k}) \mid \F^{y_r} \mbox{ does not contain } \\ & & \mbox{ any $k'$-rational line }\}. 
\end{eqnarray*}

\bigskip

{\it Proof of Theorem 1 in the general case:} The proof is the same as in the absolutely  simple case and
uses Proposition \ref{Propo2}.
\qed

\smallskip
We finish  this paper by considering a non-trivial example.

\begin{eg}  Let $G={\rm Res}_{k'/k} {\rm PGL}_{2,k'}$ with $|k':k|=2.$ Then $\nu$ corresponds to a tuple $(\nu_1,\nu_2)\in (\mQ^2)^0_+\times (\mQ^2)^0_+.$ Let $\nu_1=(x_1\geq x_2)$
and $\nu_2=(y_1\geq y_2).$ Then $x_2=-x_1\leq 0$ and $y_2=- y_1 \leq 0.$ 
If $\nu_1 \neq \nu_2$ then we are automatically in situation that w.l.o.g. $-x_2 >y_1$. Note that we allow $\nu_2=(0,0)$ to be trivial. Thus $\Fb=\mP^1 \times \mP^j, j=0,1,$  depending on whether $\nu_2$ is trivial or not. Then $E=k$ and the period domain
is given by
$$\Fb^{ss}=\Omega_{k'}^2 \times \mP^j .$$
In particular, we get $\pi_1(\Fb^{ss})= \pi_1(\Omega_{k'}^2). $
In the case $\nu_1=\nu_2$ we get $E=k'$ and
$$\Fb^{ss}=\mP^1 \times \mP^1\setminus \Delta(\mP^1(k')), $$
where $\Delta:\mP^1 \hookrightarrow \mP^1 \times \mP^1$ denotes the diagonal morphism. Here we have $\pi_1(\Fb^{ss})= \{1\}. $
\end{eg}

{
\bigskip

\end{document}